\newtheorem{lem}{Lemma}
\newtheorem{prop}[lem]{Proposition}
\newtheorem{bem}[lem]{Remark}
\newtheorem{theorem}[lem]{Theorem}
\begin{document}

\begin{center}
\textbf{\LARGE{WITT VECTOR RINGS AND QUOTIENTS OF MONOID ALGEBRAS}} 
\begin{verbatim}
\end{verbatim}
Sina Ghassemi-Tabar 
\end{center}
\begin{verbatim}
\end{verbatim}
\begin{abstract} \noindent
\textsc{Abstract.} In a previous paper Cuntz and Deninger introduced the ring $C(R)$ for a perfect $\mathbb{F}_p$-algebra $R$. The ring $C(R)$ is canonically isomorphic to the $p$-typical Witt ring $W(R)$. In fact there exist canonical isomorphisms $\alpha_n \colon \mathbb{Z}R/I^n \xrightarrow{\sim} W_n(R)$. In this paper we give explicit descriptions of the isomorphisms $\alpha_n$ for $n\geq 2$ if $p\geq n$.
\end{abstract}

\section{Introduction}
For a perfect $\mathbb{F}_p$-algebra $R$ consider the monoid algebra $\mathbb{Z}R$ where $R$ is viewed as a monoid under multiplication. In \cite{CD1} the ring $C(R)$ is constructed as the $I$-adic completion of $\mathbb{Z}R$ where $I$ is the kernel of the natural projection $\pi\colon\mathbb{Z}R\to R$. It turns out that $C(R)$ is a strict $p$-ring with $C(R)/pC(R)=R$ and therefore canonically isomorphic to the ring of $p$-typical Witt vectors of $R$. As an immediate consequence we have a unique isomorphism $\alpha_n\colon \mathbb{Z}R/I^n \xrightarrow{\sim} W_n(R)$ for every $n\geq 2$, c.f. \cite{CD1} Remark 6 and Corollary 7. In \cite{CD1} there is an explicit description of the isomorphism $\alpha_2\colon \mathbb{Z}R/I^2 \xrightarrow{\sim} W_2(R)$.
It was verified by using addition and multiplication on the truncated Witt ring $W_2(R)$ to prove that $\alpha_2$ is a homomorphism and to conclude that it has to be the unique isomorphism. We choose another approach to calculate the isomorphism $\alpha_n$ by using the inverse map $\beta_n \colon W_n(R) \xrightarrow{\sim} \mathbb{Z}R/I^n $ given by the formula 
\begin{equation}
\beta_n(r_0, r_1, \dots, r_{n-1})=\sum_{k=0}^{n-1}p^k[\phi^{-k}(r_k)] \bmod I^n \label{beta_n}
\end{equation}
where $\phi$ is the Frobenius automorphism on $R$, c.f. \cite{CD2} Corollary 6.5, \cite{Ser} II §5. For background on the classical theory of Witt vectors see \cite{Haz}, \cite{Rab}. I would like to thank C. Deninger for suggesting the topic of this note and for helpful discussions.

\section{Determination of the isomorphism $\alpha_n$}

Let $R$ be a perfect $\mathbb{F}_p$-algebra. Consider the map $\phi \colon \mathbb{Z}R \to \mathbb{Z}R$, $\sum n_r[r] \mapsto \sum n_r[r^p]$. We have $\phi(x)\equiv x^p \bmod p\mathbb{Z}R $ for $x\in \mathbb{Z}R$. Therefore we can introduce the ``arithmetic derivation''
\begin{align*}
 \delta\colon \mathbb{Z}R &\to \mathbb{Z}R \text{,} \quad x\mapsto\frac{1}{p}(\phi(x)-x^p) \text{.}
\end{align*}
We mention some immediate facts about $\delta$.
\begin{prop}[\cite{CD1} page 2]
For $x, y, x_1, \dots, x_n \in \mathbb{Z}R$ we have:
\begin{enumerate}[(i)]
\item
\begin{equation}
\delta(x+y) = \delta(x) + \delta(y) - \sum_{k=1}^{p-1} \frac{1}{p} \binom{p}{k}x^k y^{p-k} \label{binom}
\end{equation}
\item
\begin{equation}
\delta(xy) = \delta(x)\phi(y) + x^p\delta(y) \label{prod}
\end{equation}
\item
\begin{equation*}
\delta(x_1 \cdots x_n) = \sum_{k=1}^n x_1^p \cdots x_{k-1}^p\delta(x_k)\phi(x_{k+1}) \cdots \phi(x_n)
\end{equation*}
\item
\begin{align}
 \delta(x+y) \equiv \delta(x)+\delta(y)\bmod I^n \qquad \text{if }x \in I^n \text{ or }y \in I^n \label{a+b}
\end{align}
\item
\begin{equation}
 \delta(I^n) \subset I^{n-1} \qquad \text{for n $\ge$ 1} \label{teil}
\end{equation}
\end{enumerate}
\end{prop}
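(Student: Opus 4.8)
The plan is to prove the five properties of the arithmetic derivation $\delta$ by working directly from its definition $\delta(x) = \frac{1}{p}(\phi(x) - x^p)$. The key structural fact is that $\phi$ is a ring homomorphism on $\mathbb{Z}R$ (it is induced by the multiplicative map $r \mapsto r^p$ on the monoid $R$, which is a monoid endomorphism since $R$ is commutative), so $\phi(x+y) = \phi(x)+\phi(y)$ and $\phi(xy) = \phi(x)\phi(y)$. The only genuinely nontrivial point to keep in mind throughout is that $\delta$ is \emph{not} a ring homomorphism; the failure to be additive and multiplicative is exactly what the formulas in (i) and (ii) quantify.

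Let me sketch how I'd verify each item. For **(i)** I would expand $\delta(x+y) = \frac{1}{p}(\phi(x)+\phi(y) - (x+y)^p)$, apply the binomial theorem to $(x+y)^p$, split off the $k=0$ and $k=p$ terms to reconstruct $\delta(x)$ and $\delta(y)$, and collect the remaining cross terms into $-\sum_{k=1}^{p-1}\frac{1}{p}\binom{p}{k}x^k y^{p-k}$; the coefficients $\frac{1}{p}\binom{p}{k}$ are integers for $1\le k\le p-1$, so the expression lands in $\mathbb{Z}R$ and the identity is purely formal. For **(ii)** I would compute $\delta(xy) = \frac{1}{p}(\phi(x)\phi(y) - x^p y^p)$ and add and subtract the term $x^p\phi(y)$ (a standard Leibniz-rule trick), which factors as $\frac{1}{p}(\phi(x)-x^p)\phi(y) + x^p\cdot\frac{1}{p}(\phi(y)-y^p) = \delta(x)\phi(y) + x^p\delta(y)$. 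Item **(iii)** follows from (ii) by a routine induction on $n$, telescoping the Leibniz rule. For **(iv)**, I would use (i): when $x\in I^n$ or $y\in I^n$, each cross term $x^k y^{p-k}$ with $1\le k\le p-1$ is divisible by $xy \in I^{n+1}\subset I^n$ (since both $x$ and $y$ appear with positive exponent), so the correction sum vanishes mod $I^n$. Finally **(v)**: I would show $\phi(I)\subset I$ and then $\phi(I^n)\subset I^n$, and separately that $x\in I^n$ forces $x^p\in I^{np}\subset I^{n+1}\subset I^n$; combined with $\delta(I)\subset I^0 = \mathbb{Z}R$ as a base case and the Leibniz rule (iii) applied to products of elements of $I$, an induction pushes $\delta$ down by exactly one power of $I$.

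**The main obstacle** I anticipate is item (v), the filtration estimate $\delta(I^n)\subset I^{n-1}$. A single element of $I^n$ is a $\mathbb{Z}$-linear combination of $n$-fold products of elements of $I$, and since $\delta$ is not additive one cannot estimate termwise without control of the additive error. The clean route is to combine the additivity-up-to-$I^n$ statement (iv) with the product rule (iii): write a generator of $I^n$ as $x_1\cdots x_n$ with each $x_j\in I$, apply (iii), and observe that each summand $x_1^p\cdots x_{j-1}^p\,\delta(x_j)\,\phi(x_{j+1})\cdots\phi(x_n)$ contains $n-1$ factors lying in $I$ (namely all the $x_i^p$ and $\phi(x_i)$ for $i\ne j$, using $\phi(I)\subset I$ and $I^p\subset I$), hence lies in $I^{n-1}$; then (iv) lets me pass from generators to arbitrary elements of $I^n$ without incurring additional error beyond $I^{n-1}$. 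Getting the bookkeeping of exponents and the inductive use of (iv) exactly right is the delicate part, but no deep idea is needed beyond the three preceding identities.
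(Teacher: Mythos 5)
The paper does not actually prove this proposition: it is quoted from \cite{CD1} (page 2), so there is no in-paper argument to compare against. Your direct verification from the definition $\delta(x)=\frac{1}{p}(\phi(x)-x^p)$ is the natural one and is correct in substance: (i) and (ii) are exactly the binomial expansion and the add-and-subtract Leibniz trick, (iii) is a routine induction on (ii), and your plan for (v) is the right one --- apply (iii) to a generator $x_1\cdots x_n$ of $I^n$, observe that each Leibniz summand contains $n-1$ factors in $I$, and then use (iv) to pass to arbitrary finite sums of such generators, the error being in $I^n\subset I^{n-1}$. For the ingredient $\phi(I)\subset I$, which you assert but do not justify, the one-line argument is that $\pi\circ\phi = F\circ\pi$ where $F$ is the Frobenius of $R$: indeed $\pi(\phi(\sum n_r[r]))=\sum n_r r^p = \bigl(\sum n_r r\bigr)^p$ in the $\mathbb{F}_p$-algebra $R$, so $\phi$ preserves $\ker\pi=I$.

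One assertion in your sketch of (iv) is false as written, though the conclusion survives: you claim each cross term $x^k y^{p-k}$ is divisible by $xy\in I^{n+1}$. The hypothesis of (iv) is only that $x\in I^n$ \emph{or} $y\in I^n$; the other element is completely arbitrary in $\mathbb{Z}R$ (take $y=[1]$, for instance), so $xy$ need not lie in $I^{n+1}$, nor even in $I$ beyond what the single factor provides. What is true, and all that is needed, is this: if say $x\in I^n$, then for $1\le k\le p-1$ each term $\frac{1}{p}\binom{p}{k}x^k y^{p-k}$ lies in $I^n$, because $x^k\in I^n$ (even $I^{nk}$), $I^n$ is an ideal, and $\frac{1}{p}\binom{p}{k}\in\mathbb{Z}$ as you noted in (i). With that repair, (iv) --- and hence (v), which relies on it --- goes through exactly as you outlined.
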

With these properties we are able to derive equations which will be useful for determining the map $\alpha_n$ above.

\begin {lem}
For $a, b, c \in \mathbb{Z}R$ we get:
\begin{enumerate}[(i)]
\item
\begin{align}
a \equiv b \bmod I^n \Rightarrow \delta(a) \equiv \delta(b) \bmod I^{n-1} \label{a=b}
\end{align}
\item
\begin{align}
a \equiv b+c \bmod I^n \text{ and } c \in I^{n-1} \Rightarrow \delta(a) \equiv \delta(b) + \delta(c) \bmod I^{n-1} \label{a=b+c}
\end{align}
\item
\begin{align}
ab \in I^n \Rightarrow \delta(a+b)\equiv\delta(a)+\delta(b) \bmod I^n \label{ab}
\end{align}
\item
\begin{align}
\delta(p) \equiv 1 \bmod I^{p-1} \label{1}
\end{align}
\item
\begin{align*}
\delta(pa)\equiv \phi(a) \bmod I^{p-1}  
\end{align*}
In particular
\begin{align}
\delta(pa)\equiv \phi(a) \bmod I^k \qquad \text{for all }  0<k<p.  \label{pa}
\end{align}
\end{enumerate}
\end{lem}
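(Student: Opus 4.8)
The plan is to derive all five parts purely formally from the five properties of the preceding Proposition, together with the single arithmetic observation that $p$, viewed as $p[1]\in\mathbb{Z}R$, lies in $I$: indeed $\pi(p[1])=p\cdot 1_R=0$ since $R$ is an $\mathbb{F}_p$-algebra. With that in hand, the only real work is bookkeeping the exponents of $I$ correctly in each step.

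For (i) I would write $a=b+z$ with $z\in I^n$ and apply property \eqref{a+b} of the Proposition to get $\delta(a)\equiv\delta(b)+\delta(z)\bmod I^n$; by \eqref{teil} we have $\delta(z)\in\delta(I^n)\subseteq I^{n-1}$, so reducing the congruence modulo $I^{n-1}$ (legitimate since $I^n\subseteq I^{n-1}$) absorbs $\delta(z)$ and yields $\delta(a)\equiv\delta(b)\bmod I^{n-1}$. Part (ii) then chains two applications: first (i) gives $\delta(a)\equiv\delta(b+c)\bmod I^{n-1}$, and since $c\in I^{n-1}$ property \eqref{a+b} (applied at level $n-1$) splits $\delta(b+c)\equiv\delta(b)+\delta(c)\bmod I^{n-1}$, which combine to the assertion.

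For (iii) I would invoke the explicit binomial formula \eqref{binom} and argue that every cross term $a^k b^{p-k}$ with $1\le k\le p-1$ is divisible by $ab\in I^n$, hence lies in the ideal $I^n$; since the coefficients $\frac1p\binom{p}{k}$ are integers for $1\le k\le p-1$, the entire correction sum lies in $I^n$, giving $\delta(a+b)\equiv\delta(a)+\delta(b)\bmod I^n$.

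The arithmetic input enters in (iv) and (v). For (iv) I would compute directly: $\phi(p)=p$, so $\delta(p)=\tfrac1p(p-p^p)=1-p^{p-1}$, and because $p\in I$ we get $p^{p-1}\in I^{p-1}$, whence $\delta(p)\equiv 1\bmod I^{p-1}$. Part (v) then combines this with the Leibniz rule \eqref{prod}: $\delta(pa)=\delta(p)\phi(a)+p^p\delta(a)$; writing $\delta(p)=1+w$ with $w\in I^{p-1}$ and noting $p^p\in I^p\subseteq I^{p-1}$, both error terms $w\phi(a)$ and $p^p\delta(a)$ vanish modulo $I^{p-1}$, so $\delta(pa)\equiv\phi(a)\bmod I^{p-1}$; the ``in particular'' clause is just $I^{p-1}\subseteq I^k$ for $0<k<p$. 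I expect no deep obstacle here — the genuinely substantive point, and the linchpin for (iv) and (v), is the identification $p\in I$, after which everything reduces to matching ideal degrees against the claimed moduli.
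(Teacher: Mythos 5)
Your proposal is correct and follows essentially the same route as the paper: parts (i) and (ii) from properties \eqref{a+b} and \eqref{teil}, part (iii) from the binomial formula \eqref{binom} with the divisibility of $\binom{p}{k}$ by $p$, part (iv) by direct computation of $\delta(p[1])$ using $p\in I$, and part (v) from the Leibniz rule \eqref{prod} combined with (iv). Your explicit justification that $p\in I$ (via $\pi(p[1])=p\cdot 1_R=0$) is a point the paper's lemma proof leaves implicit, but otherwise the arguments coincide step for step.
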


\begin {proof}
\begin{enumerate}[$(i)$]
\item
For $y\in I^n$ we have by using equations (\ref{a+b}) and (\ref{teil})  
\begin{equation*}
a=b+y \Rightarrow \delta(a)=\delta(b+y) \equiv \delta(b)+\delta(y) \bmod I^n \equiv \delta(b) \bmod I^{n-1}
\end{equation*}
since $\delta(y) \in I^{n-1}$ and $I^n \subseteq I^{n-1}$.
\item
For $c \in I^{n-1}$ we have by using equations (\ref{a+b}) and (\ref{a=b})
\begin{equation*}
a \equiv b+c \bmod I^n \stackrel{(\ref{a=b})}{\Rightarrow} \delta(a) \equiv \delta(b+c) \bmod I^{n-1} 
 \stackrel{(\ref{a+b})}{\equiv} \delta(b) + \delta(c) \bmod I^{n-1} \text{.}
\end{equation*}
\item
The assertion follows from equation (\ref{binom}) because for $1\leq k \leq p-1$ every binomial coefficient $\binom{p}{k}$ is divisible by $p$.
\item
\begin{align*}
\delta(p) &= \delta(p\cdot1) =\delta(p[1]) =[1]-p^{p-1}[1] \equiv [1] \bmod I^{p-1} \equiv 1 \bmod I^{p-1}
\end{align*}
\item
\begin{align*}
\delta(pa) &\stackrel{(\ref{prod})}{=} \delta(p)\phi(a) + p^p\delta(a)                         %    \stackrel{}{\equiv} \delta(p)\phi(a) \bmod I^p \\
\stackrel{}{\equiv} \delta(p)\phi(a) \bmod I^{p-1}  \stackrel{(\ref{1})}{\equiv} \phi(a) \bmod I^{p-1} 
\end{align*}
\end{enumerate}
\end{proof}

As already mentioned, our aim is to describe the isomorphisms $\alpha_n\colon \mathbb{Z}R/I^n\xrightarrow{\sim} W_n(R)$ for $n\in \mathbb{N}$ by explicit formulas. We begin with the case $n=2$ to clarify the method. \\
\linebreak
\textbf{Determining the isomorphism $\alpha_2$} \\																																														%	ALPHA_2
\linebreak
We obtain an explicit formula for the isomorphism $\alpha_2\colon \mathbb{Z}R/I^2\xrightarrow{\sim} W_2(R)$ by using the inverse map $\beta_2$ and the arithmetic derivation $\delta$. Because of formula (\ref{beta_n}) we know that for every element  $x \in \mathbb{Z}R/I^2$ there exist uniquely determined elements $r_0, r_1 \in R$ with
$$
x= \beta_2(r_0, r_1)= [r_0]+ p[\phi^{-1}(r_1)]\bmod I^2 \qquad \text{and} \qquad \alpha_2(x)=(r_0, r_1)\text{.}
$$
Note here that $p\in I$ since $R$ is an $\mathbb{F}_p$-algebra. So by reducing the first equation modulo $I$ we obtain  $x \equiv [r_0] \bmod I$ in $\mathbb{Z}R$ and therefore $r_0 = \pi(x)$. Substituting $\pi(x)$ for $r_0$ in the above equation, we have the identity
\begin{align*}
&x \equiv [\pi(x)] + p[\phi^{-1}(r_1)] \bmod I^2\text{.} 
\end{align*}
The remaining component $r_1$ can now be determined by applying $\delta$. We define $a$ to be $a:=\phi^{-1}(r_1)$. By applying $\delta$ to the second term on the right-hand side we have
\begin{align*}
\delta(p[a]) 	= \frac{1}{p}(\phi(p[a]) - (p[a])^p) 
		=\frac{1}{p}(p[a^p] - p^p[a]^p) 
		= [a^p] - p^{p-1}[a]^p.
\end{align*}
Since $p-1\ge1$ and $\delta([\cdot])=0$ we obtain the following congruence:
\begin{align*} 
\delta(x) \stackrel{(\ref{a=b+c})}{\equiv} \delta([\pi(x)]) + \delta(p[a]) \bmod I  
                      \equiv \delta(p[a]) \bmod I 
		 \equiv [a^p] \bmod I 
		 \equiv[r_1] \bmod I
\end{align*}
So for any element $y \in I$ satisfying $\delta(x)=[r_1]+y$ we obtain
\begin{align*}
\pi(\delta(x)) 	= \pi([r_1] + y) 
                             	= \pi([r_1]) + \pi(y) 
			= \pi([r_1]) 
			= r_1
\end{align*}
since $I$ is the kernel of the natural projection $\pi$.
In conclusion, the isomorphism $\alpha_2$ is given by the formula
\begin{equation}
\alpha_2(x) = (\pi(x), \pi(\delta(x))\text{.} \label{alpha_2}
\end{equation} 
As mentioned earlier, \cite{CD1} Proposition 8 uses a different approach to obtain this formula. \\
\linebreak
\textbf{Determining the isomorphism $\alpha_3$} \\                                              																																									% 	ALPHA_3
\linebreak
As already described we have
$$x\equiv [r_0]+ p[\phi^{-1}(r_1)]+p^2[\phi^{-2}(r_2)]\bmod I^3$$
with uniquely determined elements $r_0, r_1, r_2 \in R$. By reducing modulo $I^2$ we obtain analogue results as above for $r_0$ and $r_1$. So we can focus on calculating the component $r_2$. Therefore we apply $\delta$ two times.
Using equation (\ref{a=b+c}) leads to the congruences
\begin{align*}
x-[r_0] &\equiv  p[\phi^{-1}(r_1)]+p^2[\phi^{-2}(r_2)]\bmod I^3 \\
(\ref{a=b+c}) \Rightarrow \delta(x-[r_0]) &\equiv \delta( p[\phi^{-1}(r_1)])+\delta(p^2[\phi^{-2}(r_2)]) \bmod I^2 \\
\Rightarrow \delta(x-[r_0]) &\equiv [r_1] -p^{p-1}[r_1]+p[\phi^{-1}(r_2)]- p^{2p-1}[\phi^{-1}(r_2)] \bmod I^2 \\
2p-1>2\Rightarrow \delta(x-[r_0]) &\equiv [r_1] -p^{p-1}[r_1]+p[\phi^{-1}(r_2)] \bmod I^2  \\
\Rightarrow \delta(x-[r_0]) - [r_1] + p^{p-1}[r_1] &\equiv p[\phi^{-1}(r_2)] \bmod I^2 \text{.} \tag{$\ast$}
\end{align*}
At this point we have two different cases depending on the prime number $p$.\\
\linebreak
\underline{1. case: $p\geq 3$ } \\
\linebreak
In this case it applies that $p^{p-1} \in I^2$ and we obtain from ($\ast$) the congruence
\begin {align*}
\delta(x-[r_0])-[r_1] \equiv p[\phi^{-1}(r_2)] \bmod I^2\text{.} 
\end{align*}
Applying $\delta$ once again and using equation (\ref{a=b}) we have
\begin {align*}
&\delta(\delta(x-[r_0])-[r_1]) \equiv \delta(p[\phi^{-1}(r_2)]) \bmod I \\
\Rightarrow & \delta(\delta(x-[r_0])-[r_1]) \equiv [r_2] - p^{p-1}[r_2] \bmod I \\
\Rightarrow & \delta(\delta(x-[r_0])-[r_1]) \equiv [r_2] \bmod I
\end{align*}
which means that 
\begin{equation*}
r_2=\pi( \delta(\delta(x-[r_0])-[r_1]))\text{.}
\end{equation*}
\underline{2. case: $p=2$ } \\
\linebreak
From ($\ast$) we obtain
\begin{align*}
\delta(x-[r_0])+[r_1] &\equiv 2[\phi^{-1}(r_2)] \bmod I^2 \\
(\ref{a=b}) \Rightarrow \delta(\delta(x-[r_0])+[r_1]) &\equiv [r_2]- 2[r_2] \bmod I \\
p=2\in I \Rightarrow \delta(\delta(x-[r_0])+[r_1]) &\equiv [r_2] \bmod I \text{.}
\end{align*}
So we have
\begin{equation*}
r_2=\pi( \delta(\delta(x-[r_0])+[r_1]))\text{.}
\end{equation*}
For all prime numbers $p$ the third component is given by
\begin {equation*}
r_2=\pi(\delta(\delta(x-[r_0])+(-1)^p[r_1]))
\end{equation*}
and the isomorphism $\alpha_3$ is determined by
\begin{align}
\alpha_3(x)=  (\pi(x), \pi(\delta(x)), \pi(\delta(\delta(x-[\pi(x)])+(-1)^p[\pi(\delta(x))]))) \text{.}
\end{align} 
The same method can be used to determine the isomorphism $\alpha_4$ for all prime numbers $p$. Already in this case the formulas for the small primes $p=2$ and $p=3$ are quite complicated. For general $n\geq 2$ we therefore concentrate on the primes $p\geq n$. Here is the main result:

\begin{theorem} \label{erg1}
For $n\geq2$ and $p\geq n$ the $\nu$-th Witt vector component $r_\nu$ of an element $x\in \mathbb{Z}R/I^n$ under the isomorphism $\alpha_n\colon \mathbb{Z}R/I^n \xrightarrow{\sim} W_n(R) = R^n$ is recursively given by
\begin{align*}
r_\nu = \pi(\delta(\cdots\delta(\delta(x-[r_0])-[r_1])\cdots-[r_{\nu-1}]) ) \qquad \text{for } \nu=1, \dots, n-1
\end{align*}
with the $0$-th component being $r_0=\pi(x)$.
\end{theorem}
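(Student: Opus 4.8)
The plan is to track how repeated application of $\delta$ transforms the canonical $\beta_n$-expansion of $x$. Write the unique expansion $x \equiv \sum_{k=0}^{n-1} p^k[\phi^{-k}(r_k)] \bmod I^n$, set $y_0 = x$, and define recursively $y_{\nu+1} = \delta(y_\nu - [\pi(y_\nu)])$. The assertion to prove is exactly $r_\nu = \pi(y_\nu)$. I would establish by induction on $\nu$ the sharper statement
\begin{equation*}
y_\nu \equiv \sum_{k=\nu}^{n-1} p^{\,k-\nu}[\phi^{-(k-\nu)}(r_k)] \bmod I^{\,n-\nu} \tag{$\star_\nu$}
\end{equation*}
from which $r_\nu = \pi(y_\nu)$ drops out by reducing $(\star_\nu)$ modulo $I$: every summand with $k>\nu$ carries a factor $p\in I$, leaving $y_\nu \equiv [r_\nu] \bmod I$. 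The base case $(\star_0)$ is precisely the defining relation for $\beta_n$, and $\pi(y_0)=\pi(x)=r_0$ as already observed in the text.

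For the inductive step, abbreviate $m = n-\nu$. From $(\star_\nu)$ the element $z := y_\nu - [r_\nu]$ satisfies $z \equiv \sum_{k=1}^{m-1} u_k \bmod I^m$ with $u_k := p^k[\phi^{-k}(r_{k+\nu})] \in I^k$. By equation~(\ref{a=b}) it then suffices to evaluate $\delta$ on this explicit representative, giving $y_{\nu+1} = \delta(z) \equiv \delta\bigl(\sum_{k=1}^{m-1} u_k\bigr) \bmod I^{m-1}$. I split the work into two computations. First, a single-term identity: since $\phi$ is a ring endomorphism and $[a]^p = [a^p]$, one computes directly
\begin{equation*}
\delta(p^k[a]) = \tfrac{1}{p}\bigl(p^k[a^p] - p^{kp}[a^p]\bigr) = p^{k-1}[a^p] - p^{kp-1}[a^p],
\end{equation*}
and because $kp-1 \geq p-1 \geq m-1$ the second term lies in $I^{m-1}$; substituting $a = \phi^{-k}(r_{k+\nu})$, so that $a^p = \phi^{-(k-1)}(r_{k+\nu})$, yields $\delta(u_k) \equiv p^{k-1}[\phi^{-(k-1)}(r_{k+\nu})] \bmod I^{m-1}$.

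The main obstacle is the second computation, since $\delta$ is not additive and I must control the defect in $\delta(\sum_k u_k) - \sum_k \delta(u_k)$. I would add the terms one at a time and apply equation~(\ref{binom}) to the partial sums $w_j = u_1 + \cdots + u_j$: the correction term $\sum_{l=1}^{p-1}\tfrac1p\binom{p}{l} w_{j-1}^l\, u_j^{\,p-l}$ has $w_{j-1}\in I$ and $u_j \in I^j$, so each summand lies in $I^{\,l+(p-l)j}$, whose minimum over $1\le l\le p-1$ is attained at $l=p-1$ and equals $p+j-1 \geq m-1$. Hence every correction vanishes modulo $I^{m-1}$ and, telescoping, $\delta(\sum_k u_k) \equiv \sum_k \delta(u_k) \bmod I^{m-1}$. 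This is precisely where the hypothesis $p \geq n \geq m$ enters, both here and in discarding the term $p^{kp-1}[a^p]$ above. Combining the two computations and reindexing $k' = k+\nu$ gives
\begin{equation*}
y_{\nu+1} \equiv \sum_{k=\nu+1}^{n-1} p^{\,k-(\nu+1)}[\phi^{-(k-(\nu+1))}(r_k)] \bmod I^{\,n-(\nu+1)},
\end{equation*}
which is exactly $(\star_{\nu+1})$, completing the induction and hence the proof.
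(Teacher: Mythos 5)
Your proof is correct, and it shares the paper's overall inductive skeleton (peel off $[r_\nu]$, apply $\delta$, show the expansion shifts by one), but the crucial computation is handled by a genuinely different device. The paper never differentiates the sum term by term: in its Steps 2--3 it factors a single $p$ out of the whole tail, writing $\sum_{k=1}^{\nu} p^k[\phi^{-k}(r_k)] = p\bigl(\sum_{k=1}^{\nu} p^{k-1}[\phi^{-k}(r_k)]\bigr)$, and then invokes part (v) of the Lemma, i.e.\ equation (\ref{pa}), $\delta(pa)\equiv\phi(a)\bmod I^{p-1}$. Since $\phi$ is a ring endomorphism of $\mathbb{Z}R$, it passes through the sum automatically, so the non-additivity of $\delta$ is never confronted directly; the hypothesis $p\geq n$ enters only through the modulus $I^{p-1}$ in that lemma. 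You instead compute $\delta(u_k)=\delta(p^k[a])$ for each summand separately and control the additivity defect by hand, telescoping partial sums through equation (\ref{binom}) and checking that every correction term lies in $I^{p+j-1}\subseteq I^{m-1}$. Both routes are valid and both use $p\geq n$ essentially; the paper's is shorter because the defect is hidden inside the additive map $\phi$, while yours has two compensating virtues: the invariant $(\star_\nu)$ makes precise what the paper's ``repeat these three steps $(\nu-1)$ times'' leaves informal, and your explicit estimates show exactly which inequality is binding (the single-term bound $kp-1\geq m-1$, which at $\nu=0$, $k=1$ forces $p\geq n$), clarifying why the theorem's hypothesis is not an artifact of the bookkeeping.
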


\begin{proof}
By formula (\ref{beta_n}) the elements $r_0, \dots, r_{n-1} \in R$ are uniquely determined by the formula
\begin{align*}
x\equiv\sum_{k=0}^{n-1}p^k[\phi^{-k}(r_k)] \bmod I^n \qquad \Big(\in \mathbb{Z}R/ I^n\Big)\text{.}
\end{align*}
To calculate the components, we proceed inductively. In the following we use equations (\ref{a=b}) and (\ref{pa}) to calculate the $\nu$-th component  $r_\nu$ for $\nu=0, \dots, n-1$. By reducing modulo $I^{\nu+1}$ we obtain
$$
x\equiv\sum_{k=0}^\nu p^k[\phi^{-k}(r_k)] \bmod I^{\nu+1} \text{.}
$$
For $\nu=0$ we are done by applying $\pi$. Otherwise we continue as follows: \\
\linebreak
\underline{Step 1:} By subtracting the first term on the right-hand side we have
\begin{align*}
x-[r_0]\equiv\sum_{k=1}^\nu p^k[\phi^{-k}(r_k)] \bmod I^{\nu+1} \text{.}
\end{align*}
\underline{Step 2:} We now use equation (\ref{a=b}) to obtain
\begin{align*}
\delta(x-[r_0])\equiv\delta(\sum_{k=1}^\nu p^k[\phi^{-k}(r_k)]) \bmod I^\nu 
\equiv \delta(p\Big(\sum_{k=1}^\nu p^{k-1}[\phi^{-k}(r_k)]\Big)) \bmod I^\nu\text{.}
\end{align*}
\underline{Step 3:} Because of our assumption $p\geq n$ we can use equation (\ref{pa}) to obtain
\begin{align*}
\delta(x-[r_0])&\equiv \phi(\sum_{k=1}^\nu p^{k-1}[\phi^{-k}(r_k)]) \bmod I^\nu \\
&\equiv \sum_{k=1}^\nu p^{k-1}[\phi^{-(k-1)}(r_k)]) \bmod I^\nu \\
&\equiv \sum_{k=0}^{\nu-1}p^{k}[\phi^{-k}(r_{k+1})]) \bmod I^\nu\text{.}
\end{align*}
By repeating these three steps $(\nu-1)$-times we finally have
\begin{align*}
[r_{n-1}] \bmod I \equiv \delta(\cdots\delta(\delta(x-[r_0])-[r_1])\cdots-[r_{n-2}]) \text{.}
\end{align*}
This implies the assertion by using the natural projection $\pi$ and $I=ker(\pi)$.
\end{proof}

\begin{bem}
Comparing Theorem \ref{erg1} for $n=2$ and formula (\ref{alpha_2}) we get
$$
\pi(\delta(x))=\pi(\delta(x-[\pi(x)])) \text{.}
$$
This can also be seen directly.
\end{bem}

%\newpage
\bibliography{Literatur}
\textsc{Westfälische Wilhelms-Universität, Fachbereich Mathematik, \\ Einsteinstraße 62, 48149 Münster, Germany} \\
\linebreak
E-mail address: sina dot ghassemi-tabar at uni-muenster dot de

\end{document}